\title{Weighted Orlicz-Poincar\'e inequalities in product spaces}
\author{Lucas Yong}
\address{Department of Mathematics \\
          National Louis University \\
          Chicago, IL 60603}
\email{\href{mailto:lyong@nl.edu}{lyong@nl.edu}}
\begin{document}
\begin{abstract}
This article is a follow-up to \cite{korobenko_milshstein_yong}. We establish necessary and sufficient conditions for weighted Orlicz-Poincar\'e inequalities in product spaces. These results follow the work of Chua and Wheeden \cite{chuawheeden}, who established similar results for weighted Poincar\'e inequalities in product spaces.
\end{abstract}
\maketitle
\section{Introduction}
Interest in Sobolev and Poincar\'e inequalities arises, in particular, from their applications in regularity theory for partial differential equations. In \cite{FKS}, weighted Sobolev and Poincar\'e inequalities are used to perform the Moser iteration scheme to show H\"older continuity of weak solutions to certain classes of degenerate elliptic equations. There have also been promising results in Orlicz-Sobolev settings, where the underlying Lebesgue $L^q$ spaces are replaced by the more general Orlicz $L^\Phi$ spaces, where $\Phi$ is a Young function. An example of such a result is found in \cite{KRSSh}, where it is shown that a version of an Orlicz-Sobolev inequality can be used in a modification of the DeGiorgi iteration scheme to show continuity of weak solutions to infinitely-degenerate elliptic equations. The usefulness of these kinds of inequalities makes it natural to seek sufficient conditions for them to hold for a set of test functions (e.g. Lipschitz continuous functions). 

However, there are few such conditions that are easy to verify. In \cite{chuawheeden}, neccessary and sufficient conditions are given for weighted $1$-dimensional Poincar\'e inequalities to hold for Lipschitz continuous functions, and these are also adapted to product spaces. More recently, it was shown in \cite{korobenko_milshstein_yong} that a more general version of this result holds in the $1$-dimensional setting. That is, the authors instead considered an \emph{Orlicz}-Poincar\'e inequality, replacing the Lebesgue $L^q$ norm on the lefthand side by the more general $L^\Phi$ gauge norm (defined with respect to a Young function $\Phi$ satisfying certain properties). The main results of \cite{korobenko_milshstein_yong} are necessary and sufficient conditions on weights that are easy to verify, analogous to those in \cite{chuawheeden}. 

The main goal of the present article is to extend the results of \cite{korobenko_milshstein_yong} to obtain necessary and sufficient conditions for weighted Orlicz-Poincar\'e inequalities in \emph{product spaces}, as was done in \cite{chuawheeden} in the classical case. The significance of such conditions lies in the fact that $n$-dimensional Orlicz-Sobolev inequalities can potentially be used to develop regularity theory for degenerate partial differential equations.

We now describe our main results. While what follows is valid in $n$ dimensions, we restrict our attention to the case $n=2$ to simplify notation. 

Throughout, let $I = [a, b], J = [c, d] \subset \R$, and let $\mu = \mu_1 \times \mu_2, \nu = \nu_1\times \nu_2, w = w_1 \times w_2$ be product weights on $I \times J$ such that $\mu, \nu \in L^1(I \times J)$. Let $1 \leq p_1, p_2, s_1 < \infty$, and let $\Phi$ be a Young function.

Before stating our main results, we introduce some adaptions of notation from \cite{korobenko_milshstein_yong}. Let
\[
K_{1, \Phi}(\mu_1, \nu_1, w_1) 
= \widetilde{K}_{1, \Phi}(\mu_1, \nu_1, w_1)
= \frac{1}{\nu_1(I)}
    \sup_{a<x<b}\left\{
    \frac{1}{w_1(x)}
    \left\|
    \nu_1[a,x]\chi_{[x,b]}
    -
    \nu_1[x,b]\chi_{[a,x]}
    \right\|_{L_{\mu_1}^\Phi(I)}
    \right\}.
\]
For $p_1 > 1$, let
\[
    \begin{aligned}
    K_{p_1,\Phi}(\mu_1, \nu_1, w_1) = 
    \frac{1}{\nu_1(I)}
    &\Biggl(
    \sup_{a<x<b}\biggl\{
    \left[\Phi^{-1}\left(\frac{1}{\mu_1[x,b]^{1/2}}\right)\right]^{-2}\biggl(\int_a^x \nu_1[a,t]^{{p_1}'}w_1(t)^{1-{p_1}'} \dd t\biggr)^{1/{p_1}'}
    \biggr\}\\
    &+
    \sup_{a<x<b}\biggl\{
    \left[\Phi^{-1}\left(\frac{1}{\mu_1[a,x]^{1/2}}\right)\right]^{-2}\biggl(
    \int_x^b \nu_1[t,b]^{{p_1}'} w_1(t)^{1-{p_1}'} \dd t
    \biggr)^{1/{p_1}'}
    \biggr\}
    \Biggr),
    \end{aligned}
\]
and
\[
\begin{aligned}
\tilde{K}_{p_1,\Phi}(\mu_1, \nu_1, w_1) := 
\frac{1}{\nu_1(I)}
&\Biggl(
\sup_{a<x<b}\biggl\{
\left[\Phi^{-1}\left(\frac{1}{\mu_1[x,b]}\right)\right]^{-1}\biggl(\int_a^x \nu_1[a,t]^{{p_1}'}w_1(t)^{1-{p_1}'} \dd t\biggr)^{1/{p_1}'}\biggr\}\\
&+
\sup_{a<x<b}\biggl\{
\left[\Phi^{-1}\left(\frac{1}{\mu_1[a,x]}\right)\right]^{-1}\biggl(
\int_x^b \nu_1[t,b]^{{p_1}'} w_1(t)^{1-{p_1}'} \dd t
\biggr)^{1/{p_1}'}
\biggr\}
\Biggr).
\end{aligned}
\]
Here, ${p_1}'$ is the H\"older conjugate of ${p_1}$. 

\begin{remark}
The constants $K_{p_1,\Phi}(\mu_1, \nu_1, w_1)$ and $\tilde{K}_{p_1,\Phi}(\mu_1, \nu_1, w_1)$ are similar; for example, they are equal when $\Phi(t) = |t|^q$. Note that it is always the case that
\[
K_{p_1,\Phi}(\mu_1, \nu_1, w_1) \geq \tilde{K}_{p_1,\Phi}(\mu_1, \nu_1, w_1).
\]
Closing the gap between these constants would strengthen the results of \cite{korobenko_milshstein_yong} and the present article, but we suspect that different methods from those employed in \cite{korobenko_milshstein_yong} must be used to achieve this.

Of course, we may similarly define constants $K_{p_2,\Phi}(\mu_2, \nu_2, w_2)$ and $\tilde{K}_{p_2,\Phi}(\mu_2, \nu_2, w_2)$, which depend on the interval $J$.
\end{remark}

Our main results are as follows.
\newpage
\begin{theorem}
\label{thm:two_dim_sufficiency}
Suppose that $\Phi$ is submultiplicative and invertible on $[0, \infty)$, and that $\Gamma_i(t) :=  \Phi\left(t^{1/{p_i}}\right)$ is convex for $i = 1, 2$. If $K_{p_i,\Phi}(\mu_i, \nu_i, w_i) < \infty$ for $i = 1,2$, then
\begin{align*}
    &\left\|f - \frac{1}{\nu(I \times J)}\int_{I \times J} f(y_1, y_2) \;\dd\nu_1(y_1)\dd\nu_2(y_2)\right\|_{L_\mu^\Phi(I \times J)}\\
    \leq&\;\;
    C_1\cdot\left\|\frac{\partial f}{\partial x_1}\right\|_{L_{w_1 \times \mu_2}^{(p_1,\Phi)}(I \times J)}
    C_2 \cdot \frac{2 \left[\Phi^{-1}\left(\frac{1}{\mu_1(I)}\right)\right]^{-1}}{\nu_1(I)^{s_1}}\cdot\left\|\frac{\partial f}{\partial x_2}\right\|_{L_{\nu_1 \times w_2}^{(\widehat{s_1, p_2})}(I \times J)}
\end{align*}
for all Lipschitz continuous functions $f \colon I \times J \to \R$. Moreover, if $C_i$ are the best possible constants for $i = 1,2$, then
\[
\begin{aligned}
C_i &= K_{1, \Phi}(\mu_i, \nu_i, w_i), \quad&\text{if $p_i = 1$};\\
C_i &\leq C_0(\Phi)\cdot K_{p_i, \Phi}(\mu_i, \nu_i, w_i), \quad&\text{if $p_i > 1$}.\\
\end{aligned}
\]
Here, $C_0(\Phi) := 2 \left[\Phi^{-1}\left(\frac{1}{2}\right)\right]^{-1}$.
\end{theorem}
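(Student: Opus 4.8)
The plan is to peel off one variable at a time and invoke the one-dimensional Orlicz--Poincar\'e inequality of \cite{korobenko_milshstein_yong} twice, gluing the two estimates together with elementary properties of the gauge norm $\|\cdot\|_{L_\mu^\Phi}$ that follow from submultiplicativity of $\Phi$. Writing $\overline f := \frac{1}{\nu(I\times J)}\int_{I\times J} f\,\dd\nu_1\,\dd\nu_2$, I would set
\[
\widetilde f(x_2) := \frac{1}{\nu_1(I)}\int_I f(y_1,x_2)\,\dd\nu_1(y_1), \qquad g := f - \widetilde f, \qquad h := \widetilde f - \overline f,
\]
and record that, since $\nu = \nu_1\times\nu_2$, one has $g + h = f - \overline f$, that $g(\cdot,x_2)$ is Lipschitz with zero $\nu_1$-mean on $I$ for every $x_2$, and that $h$ depends on $x_2$ only, is Lipschitz, and has zero $\nu_2$-mean on $J$. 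By the triangle inequality for $\|\cdot\|_{L_\mu^\Phi(I\times J)}$ it then suffices to bound $\|g\|_{L_\mu^\Phi}$ and $\|h\|_{L_\mu^\Phi}$ separately, and these two bounds will produce the two terms on the right-hand side.

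For $g$: for fixed $x_2$ I would apply the one-dimensional inequality of \cite{korobenko_milshstein_yong} on $I$ with data $(\mu_1,\nu_1,w_1)$ and exponent $p_1$ (whose hypotheses hold since $\Phi$ is submultiplicative and invertible and $\Gamma_1(t)=\Phi(t^{1/p_1})$ is convex), obtaining, with a constant $C_1$ independent of $x_2$ satisfying $C_1=K_{1,\Phi}(\mu_1,\nu_1,w_1)$ if $p_1=1$ and $C_1\le C_0(\Phi)K_{p_1,\Phi}(\mu_1,\nu_1,w_1)$ if $p_1>1$, the pointwise bound $\|g(\cdot,x_2)\|_{L_{\mu_1}^\Phi(I)}\le C_1\,\|\tfrac{\partial f}{\partial x_1}(\cdot,x_2)\|_{L_{w_1}^{p_1}(I)}$ (using $\partial g/\partial x_1=\partial f/\partial x_1$). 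Then I would invoke the Fubini-type inequality $\|G\|_{L_{\mu_1\times\mu_2}^\Phi(I\times J)}\le\big\|\,\|G(\cdot,x_2)\|_{L_{\mu_1}^\Phi(I)}\,\big\|_{L_{\mu_2}^\Phi(J)}$, which follows in a couple of lines from submultiplicativity by unwinding the two Luxemburg infima, and take the $L_{\mu_2}^\Phi(J)$ norm of the pointwise bound; monotonicity and homogeneity of that norm then give $\|g\|_{L_\mu^\Phi(I\times J)}\le C_1\,\big\|\tfrac{\partial f}{\partial x_1}\big\|_{L_{w_1\times\mu_2}^{(p_1,\Phi)}(I\times J)}$, which is the first term (by the definition of the mixed $(p_1,\Phi)$-norm as an iterated norm).

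For $h$: since $h$ does not depend on $x_1$, comparing the defining constraint $\mu_1(I)\int_J\Phi(|h|/\lambda)\,\dd\mu_2\le1$ with $\int_J\Phi(|h|/\lambda)\,\dd\mu_2\le1$, and again using submultiplicativity and invertibility of $\Phi$, yields $\|h\|_{L_\mu^\Phi(I\times J)}\le\big[\Phi^{-1}(1/\mu_1(I))\big]^{-1}\|h\|_{L_{\mu_2}^\Phi(J)}$. Applying the one-dimensional inequality of \cite{korobenko_milshstein_yong} on $J$ with data $(\mu_2,\nu_2,w_2)$ and exponent $p_2$ to the mean-zero Lipschitz function $h$ bounds $\|h\|_{L_{\mu_2}^\Phi(J)}$ by $C_2\|h'\|_{L_{w_2}^{p_2}(J)}$ with $C_2$ as in the statement. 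Finally, $h'(x_2)=\frac{1}{\nu_1(I)}\int_I\frac{\partial f}{\partial x_2}(y_1,x_2)\,\dd\nu_1(y_1)$, so H\"older's inequality with exponent $s_1$ against $\nu_1$ gives $|h'(x_2)|\le\nu_1(I)^{-1/s_1}\big\|\frac{\partial f}{\partial x_2}(\cdot,x_2)\big\|_{L_{\nu_1}^{s_1}(I)}$; taking the $L_{w_2}^{p_2}(J)$ norm in $x_2$ produces $\big\|\frac{\partial f}{\partial x_2}\big\|_{L_{\nu_1\times w_2}^{(\widehat{s_1,p_2})}(I\times J)}$, and collecting the constants (and absorbing a harmless numerical factor) gives the second term.

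For the best constants: when $p_i>1$ the claim $C_i\le C_0(\Phi)K_{p_i,\Phi}(\mu_i,\nu_i,w_i)$ is exactly what the construction above provides. When $p_i=1$ I would additionally show $C_i$ cannot be taken smaller than $K_{1,\Phi}(\mu_i,\nu_i,w_i)$ by testing the inequality against functions depending on the single variable $x_i$ (for which the other gradient term vanishes), taken to be near-extremizers of the one-dimensional inequality of \cite{korobenko_milshstein_yong}, so that the ratio of the two sides tends to $K_{1,\Phi}(\mu_i,\nu_i,w_i)$. The step I expect to be the main obstacle is precisely this interaction between the gauge norm and the product structure: in the Lebesgue case $\Phi(t)=|t|^q$ the Fubini-type inequality and the reduction for functions of one variable are exact identities, whereas for a general Young function they are genuine inequalities, this is where submultiplicativity of $\Phi$ and the convexity of $\Gamma_i$ are used and where the Orlicz-specific factors $\big[\Phi^{-1}(1/\mu_1(I))\big]^{-1}$ and $C_0(\Phi)$ arise, and keeping all these constants matched — especially in the sharpness argument when $p_i=1$ — is the delicate part.
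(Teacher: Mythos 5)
Your decomposition and the treatment of the first term coincide with the paper's argument (triangle inequality around the partial average $\widetilde f$, the iterated-norm inequality $\|G\|_{L_\mu^\Phi(I\times J)}\le\bigl\|\|G\|_{L_{\mu_1}^\Phi(I)}\bigr\|_{L_{\mu_2}^\Phi(J)}$ from submultiplicativity, then the one-dimensional result pointwise in $x_2$). The gap is in the second term. After bounding $\|h\|_{L_\mu^\Phi(I\times J)}\le\left[\Phi^{-1}\left(\tfrac{1}{\mu_1(I)}\right)\right]^{-1}\|h\|_{L_{\mu_2}^\Phi(J)}$ and applying the one-dimensional inequality to $h$, you estimate $|h'(x_2)|$ by H\"older in $y_1$ \emph{pointwise in $x_2$} and then take the $L_{w_2}^{p_2}(J)$ norm. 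That produces $\nu_1(I)^{-1/s_1}\bigl\|\,\|\tfrac{\partial f}{\partial x_2}\|_{L_{\nu_1}^{s_1}(I)}\bigr\|_{L_{w_2}^{p_2}(J)}$, i.e.\ the \emph{un-hatted} mixed norm $L_{\nu_1\times w_2}^{(s_1,p_2)}$ (inner norm over $I$, outer over $J$), whereas the theorem's right-hand side is the hatted norm $L_{\nu_1\times w_2}^{(\widehat{s_1,p_2})}$, with inner $L_{w_2}^{p_2}(J)$ and outer $L_{\nu_1}^{s_1}(I)$. These iterated norms are not equal, and by the mixed-norm Minkowski inequality the quantity you obtain is dominated by the hatted one only when $s_1\le p_2$; since the theorem imposes no relation between $s_1$ and $p_2$, your final step does not yield the stated inequality in general.

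The fix is to keep the $y_1$-integral outermost until after the $x_2$-norm has been taken. Either apply the classical integral Minkowski inequality to $\|h'\|_{L_{w_2}^{p_2}(J)}=\bigl\|\tfrac{1}{\nu_1(I)}\int_I \tfrac{\partial f}{\partial x_2}(y_1,\bullet)\,\dd\nu_1(y_1)\bigr\|_{L_{w_2}^{p_2}(J)}$ to pull the $\dd\nu_1(y_1)$ integral outside, and only then use H\"older in $y_1$, which gives exactly $\nu_1(I)^{-1/s_1}\|\tfrac{\partial f}{\partial x_2}\|_{L_{\nu_1\times w_2}^{(\widehat{s_1,p_2})}}$; or follow the paper, which does not apply the one-dimensional inequality to the averaged function $h$ at all: it uses the generalized Orlicz--Minkowski inequality (Lemma \ref{lemma:gen_minkowski}, costing the factor $2$) to move the $L_{\mu_2}^\Phi(J)$ norm inside the $y_1$-integral, applies the one-dimensional Poincar\'e inequality to $f(y_1,\cdot)$ for each fixed $y_1$, and then applies H\"older in $y_1$. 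In particular the factor $2$ in the displayed constant is not a ``harmless numerical factor'' to be absorbed; in the paper it arises precisely from that generalized Minkowski step. (Your proposed lower-bound/sharpness argument for $p_i=1$ is extra material not contained in the paper's proof and is not needed for the statement as proved there.)
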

\begin{theorem}
\label{thm:two_dim_necessity}
Suppose that $\Phi$ is invertible on $[0, \infty)$. For $i = 1, 2$, if there exists constants $C_i > 0$ such that
\begin{align}
\label{eqn:sufficiencythm}
\begin{split}
    &\left\|f - \frac{1}{\nu(I \times J)}\int_{I \times J} f(y_1, y_2) \;\dd\nu_1(y_1)\dd\nu_2(y_2)\right\|_{L_\mu^\Phi(I \times J)}\\
    \leq&\;\;
    C_1\cdot\left\|\frac{\partial f}{\partial x_1}\right\|_{L_{w_1 \times \mu_2}^{(p_1,\Phi)}(I \times J)}
    \;+\;
    C_2 
    \cdot \frac{2 \left[\Phi^{-1}\left(\frac{1}{\mu_1(I)}\right)\right]^{-1}}{\nu_1(I)^{s_1}}
    \cdot\left\|\frac{\partial f}{\partial x_2}\right\|_{L_{\nu_1 \times w_2}^{(\widehat{s_1, p_2})}(I \times J)}
\end{split}
\end{align}
for all Lipschitz continuous functions $f \colon I \times J \to \R$, then $\tilde{K}_{p_i,\Phi}(\mu_i, \nu_i, w_i) < \infty$.
\end{theorem}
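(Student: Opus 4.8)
The plan is to derive the necessity of $\tilde{K}_{p_i,\Phi}(\mu_i, \nu_i, w_i) < \infty$ from the two-dimensional inequality \eqref{eqn:sufficiencythm} by specializing to test functions that depend on only one variable, thereby reducing to the one-dimensional necessity result of \cite{korobenko_milshstein_yong}. Consider the case $i = 1$ first. Given a Lipschitz function $g \colon I \to \R$, set $f(x_1, x_2) := g(x_1)$. Then $\partial f/\partial x_2 \equiv 0$, so the second term on the right-hand side of \eqref{eqn:sufficiencythm} vanishes. On the left, since $f$ is independent of $x_2$ and $\nu = \nu_1 \times \nu_2$ with $\mu = \mu_1 \times \mu_2$, the averaging integral reduces to $\frac{1}{\nu_1(I)}\int_I g\,\dd\nu_1$, and the $L_\mu^\Phi(I \times J)$ gauge norm of a function depending only on $x_1$ should, by a Fubini-type computation for Orlicz gauge norms over a product measure, reduce to (a constant multiple of) the $L_{\mu_1}^\Phi(I)$ gauge norm of $g - \frac{1}{\nu_1(I)}\int_I g\,\dd\nu_1$. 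Similarly, the first term on the right, $\|\partial f/\partial x_1\|_{L_{w_1 \times \mu_2}^{(p_1,\Phi)}(I \times J)}$, should reduce to a constant times $\|g'\|_{L_{w_1}^{(p_1,\Phi)}(I)}$ (or the appropriate one-dimensional mixed norm), since $g'$ depends only on $x_1$. This yields a one-dimensional Orlicz-Poincaré inequality for $g$ with a finite constant, and invoking the one-dimensional necessity theorem from \cite{korobenko_milshstein_yong} gives $\tilde{K}_{p_1,\Phi}(\mu_1, \nu_1, w_1) < \infty$.

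For the case $i = 2$, I would instead take $f(x_1, x_2) := h(x_2)$ for Lipschitz $h \colon J \to \R$. Now $\partial f / \partial x_1 \equiv 0$, killing the first term, and the second term becomes a constant multiple of $\|h'\|_{L_{w_2}^{(\widehat{s_1,p_2})\text{-reduction}}(J)}$ after the Fubini-type reduction of the $L_{\nu_1 \times w_2}$ mixed norm over the $x_1$-variable (the $\nu_1$-integration contributing only the finite factor $\nu_1(I)$, which combines with the explicit prefactor $2[\Phi^{-1}(1/\mu_1(I))]^{-1}/\nu_1(I)^{s_1}$). The left-hand side reduces as before to the $L_{\mu_2}^\Phi(J)$ gauge norm of $h$ minus its $\nu_2$-average. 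Again this is a one-dimensional Orlicz-Poincaré inequality with finite constant, so the one-dimensional necessity result yields $\tilde{K}_{p_2,\Phi}(\mu_2, \nu_2, w_2) < \infty$. The hypothesis $\mu, \nu \in L^1(I \times J)$ together with the product structure ensures all the one-variable measures are finite, so the reductions are legitimate.

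The main obstacle I anticipate is the bookkeeping in the Fubini-type reduction of the Orlicz gauge norms and of the mixed $(p,\Phi)$- and $(\widehat{s_1,p_2})$-norms when the integrand depends on only one coordinate: unlike the $L^q$ case, the gauge norm $\|\cdot\|_{L_\mu^\Phi}$ is defined through an infimum over scaling parameters, so one must check carefully that restricting to a one-variable function over a product measure $\mu_1 \times \mu_2$ produces exactly (or up to a controlled constant depending only on $\mu_2(J)$ or $\mu_1(I)$ and $\Phi$) the one-variable gauge norm. This will likely require the submultiplicativity/invertibility properties of $\Phi$ and an explicit identity of the form $\|g(x_1)\|_{L_{\mu_1 \times \mu_2}^\Phi(I \times J)} = \|g\|_{L_{\mu_1}^\Phi(I)}$ when $\mu_2(J)$ is normalized, or a rescaling argument absorbing $\mu_2(J)$ into the constant. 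Once this reduction lemma is in hand, the rest is a direct appeal to the one-dimensional necessity theorem of \cite{korobenko_milshstein_yong}, applied separately in each variable. I would isolate the reduction as a preliminary lemma before proving the theorem.
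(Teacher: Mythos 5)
Your proposal is correct and follows essentially the same route as the paper: test the two-dimensional inequality with Lipschitz functions depending on only one variable, reduce the product-space gauge norm and the mixed norms to their one-dimensional counterparts, and invoke the one-dimensional necessity theorem of \cite{korobenko_milshstein_yong}. The reduction lemma you anticipate is exactly what the paper establishes beforehand (\autoref{lemma:invert_norm_inequality_second_var} and \autoref{lemma:invert_norm_inequality_first_var}, proved via submultiplicativity of $\Phi$), so no new ideas are missing.
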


The norms utilized in the inequalities in both the above theorems will be defined in the next section.

The proofs of these results closely follow the arguments in \cite[Section 3]{chuawheeden}, however, the adaptation from Lebesgue spaces to Orlicz spaces is not straightforward, and there are obstacles to overcome. For example, the gauge norm on the Orlicz space $L_\mu^\Phi(I \times J)$ is not generally identical to the iteration of the one-dimensional gauge norms on each product space, unlike the Lebesgue norm on $L_\mu^q(I \times J)$.

\textbf{Acknowledgements.} The author is extremely grateful to Luda Korobenko for her unwavering support and helpful comments throughout the duration of this project.
\newpage
\section{Preliminaries}
\label{sec:prelim}
We first review some important aspects of the theory of Orlicz spaces. We refer the reader to \cite{raoren} for a comprehensive introduction.
\begin{defn}
A \textbf{Young function} is a convex function $\Phi\colon \R \to [0, \infty)$ such that
\begin{enumerate}[label=(\roman*)]
    \item $\Phi$ is even, i.e. $\Phi(-t) = \Phi(t)$,
    \item $\Phi(0) = 0$,
    \item $\lim_{t \to \infty} \Phi(t) = +\infty$.
\end{enumerate}
\end{defn}
\begin{defn}
Let $f \colon I \times J \to \R$ be a measurable function. Define
\[
\|f\|_{L_\mu^\Phi(I \times J)} := \inf\left\{k > 0 : \int_\Omega \Phi\left(\frac{f}{k}\right)\dd \mu \leq 1\right\}.
\]
$\|\cdot\|_{L_\mu^\Phi(I \times J)}$ is called the \textbf{gauge norm} (or \emph{Luxemburg} norm).
\end{defn}
\begin{defn}
Let
\[
L_\mu^\Phi(I \times J) := \left\{f \colon I \times J \to \R: f \text{ is measurable and } \|f\|_{L_\mu^\Phi} < \infty\right\}.
\]
\end{defn}
The reader may verify that the gauge norm $\|\cdot\|_{L_\mu^\Phi(I \times J)}$ is indeed a norm which makes $L_\mu^\Phi(I \times J)$ a Banach space.

The following Lemma from \cite{korobenko_milshstein_yong} is a generalization of Minkowski's inequality for integrals, which will be important in the proofs contained in the next section.
\begin{lemma}
\label{lemma:gen_minkowski}
Let $F \colon \R \times \R \to \R$ be a measurable function. Then,
\[
\left\|\int F(\bullet,t) \dd t\right\|_{L_\mu^\Phi(\R)} 
\leq 2\int \left\|F(\bullet,t)\right\|_{L_\mu^\Phi(\R)}  \dd t
\]
\end{lemma}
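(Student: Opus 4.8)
The plan is to argue directly from the definition of the gauge norm together with Jensen's inequality; in fact I would establish the slightly stronger bound
\[
\left\|\int F(\bullet,t)\,\dd t\right\|_{L_\mu^\Phi(\R)} \;\le\; \int \left\|F(\bullet,t)\right\|_{L_\mu^\Phi(\R)}\,\dd t,
\]
from which the stated inequality follows immediately. Set $M := \int \|F(\bullet,t)\|_{L_\mu^\Phi(\R)}\,\dd t$ and $k(t) := \|F(\bullet,t)\|_{L_\mu^\Phi(\R)}$. If $M = \infty$ there is nothing to prove, so assume $M < \infty$; then $k(t) < \infty$ for a.e.\ $t$. On the set $\{t : k(t) = 0\}$ one has $F(\bullet,t) = 0$ $\mu$-a.e., and a quick application of Tonelli shows $F(x,t) = 0$ for a.e.\ $(x,t)$ with $k(t) = 0$, so $\int F(x,t)\,\dd t = \int_{\{k>0\}} F(x,t)\,\dd t$ for $\mu$-a.e.\ $x$; I may therefore assume $k(t) > 0$ throughout. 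By the definition of the gauge norm it then suffices to show $\int_\R \Phi\!\left(\frac{1}{M}\int F(x,t)\,\dd t\right)\dd\mu(x) \le 1$.

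For the key step I would introduce the probability measure $\dd\lambda(t) := \frac{k(t)}{M}\,\dd t$, so that $\frac{1}{M}\int F(x,t)\,\dd t = \int \frac{F(x,t)}{k(t)}\,\dd\lambda(t)$. Jensen's inequality for the convex function $\Phi$ and the probability measure $\lambda$ gives
\[
\Phi\!\left(\frac{1}{M}\int F(x,t)\,\dd t\right) \;\le\; \int \Phi\!\left(\frac{F(x,t)}{k(t)}\right)\dd\lambda(t)
\]
for $\mu$-a.e.\ $x$. Integrating in $x$ against $\mu$ and interchanging the integrals by Tonelli (the integrand is nonnegative),
\[
\int_\R \Phi\!\left(\frac{1}{M}\int F(x,t)\,\dd t\right)\dd\mu(x) \;\le\; \int \left(\int_\R \Phi\!\left(\frac{F(x,t)}{k(t)}\right)\dd\mu(x)\right)\dd\lambda(t) .
\]
For each fixed $t$ the inner integral is $\int_\R \Phi\!\left(F(x,t)/\|F(\bullet,t)\|_{L_\mu^\Phi(\R)}\right)\dd\mu(x) \le 1$, which follows from the defining infimum by letting $k \downarrow k(t)$ and using monotone convergence. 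Hence the right-hand side is at most $\int \dd\lambda(t) = 1$, and the claim follows.

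I expect the only real obstacles to be measure-theoretic bookkeeping rather than anything substantive: one needs $t \mapsto \|F(\bullet,t)\|_{L_\mu^\Phi(\R)}$ to be measurable so that $M$ and $\lambda$ make sense, joint measurability of $(x,t) \mapsto \Phi(F(x,t)/k(t))$ to justify Tonelli, and $\int \frac{|F(x,t)|}{k(t)}\,\dd\lambda(t) < \infty$ for $\mu$-a.e.\ $x$ to justify Jensen (this last point follows by Tonelli and the continuous embedding of $L_\mu^\Phi$ into $L_\mu^1$ for finite $\mu$, which controls $\int \bigl(\int_\R |F(x,t)|\,\dd\mu(x)\bigr)\,\dd t$ by a constant times $\int k(t)\,\dd t = M$); all of these are routine given joint measurability of $F$ and finiteness of $\mu$. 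Finally, I would remark that this argument yields the constant $1$, so the factor $2$ in the statement is harmless slack, convenient perhaps for a more streamlined presentation but not actually required.
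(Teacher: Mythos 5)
Your proof is correct, and it takes a genuinely different route from the paper: the paper does not prove the lemma itself but defers to \cite[Lemma 2.8]{korobenko_milshstein_yong}, and the factor $2$ in the statement reflects the slack one incurs there (it is, for instance, exactly the loss from passing through the dual Orlicz norm, for which the integral Minkowski inequality is immediate, and then comparing it with the gauge norm). Your argument --- normalizing by $M=\int\|F(\bullet,t)\|_{L_\mu^\Phi(\R)}\,dt$, treating $d\lambda(t)=k(t)\,dt/M$ as a probability measure, applying Jensen's inequality, and using $\int\Phi\bigl(F(x,t)/k(t)\bigr)\,d\mu(x)\le 1$ for each fixed $t$ (valid by monotone convergence, since $\Phi$ is finite-valued, hence continuous) --- works entirely at the level of the gauge norm and yields the sharper constant $1$; this implies the stated inequality and would even let one drop the factor $2$ that this lemma contributes to the constant in front of the $\partial f/\partial x_2$ term in \autoref{thm:two_dim_sufficiency}. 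One detail to tighten: your justification of the $\lambda$-integrability hypothesis for Jensen invokes the embedding $L_\mu^\Phi\hookrightarrow L_\mu^1$, which requires $\|1\|_{L_\mu^\Psi}<\infty$ for the conjugate Young function $\Psi$, i.e.\ essentially that $\mu$ be a finite measure; the lemma is stated for an arbitrary weight on $\R$, so as written this step leans on a hypothesis not in the statement (harmless in the paper's applications, where $\mu$ is integrable, but avoidable). A cleaner route is to first replace $F$ by $|F|$, which is legitimate because the gauge norm is monotone; for a nonnegative measurable integrand with values in $[0,\infty]$, Jensen's inequality needs no integrability hypothesis (truncate at level $n$ and pass to the limit by monotone convergence, using $\Phi(t)\to\infty$), and the resulting bound $\int_\R\Phi\bigl(\frac{1}{M}\int|F(x,t)|\,dt\bigr)\,d\mu(x)\le 1$ then shows a posteriori that $\int|F(x,t)|\,dt<\infty$ for $\mu$-a.e.\ $x$, after which the bound for $F$ itself follows. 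With that adjustment, together with the routine measurability of $t\mapsto\|F(\bullet,t)\|_{L_\mu^\Phi(\R)}$ that you already flag, your proof is complete and strictly stronger than the cited result.
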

The reader may refer to \cite[Lemma 2.8]{korobenko_milshstein_yong} for a proof of this result.

Finally, we conclude this section by introducing adaptions of some notation from \cite{chuawheeden}. 
\begin{defn}
\label{def:repeated_norm}
Let $F \colon I \times J \to \R$ be a measurable function. Let
\[
\left\|F\right\|_{L_{\mu_1 \times \mu_2}^{(p_1,p_2)}(I \times J)}
:=
\left\|\left\|F\right\|_{L_{\mu_1}^{p_1}(I)}\right\|_{L_{\mu_2}^{p_2}(J)}.
\]
\end{defn}
\begin{remark}
\label{remark:repeatednorms}
The use of repeated norms in the definition above requires some care. For $t \in J$, define $F_{t} \colon I \to \R$ by $F_{t}(x_1) = F(x_1, t)$. Now define
\[
\begin{aligned}
    \left\|F_{\bullet}\right\|_{L_{\mu_1}^{p_1}(I)} \colon J &\longrightarrow [0, \infty)\\
    x_2 &\longmapsto \left\|F_{x_2}\right\|_{L_{\mu_1}^{p_1}(I)}
\end{aligned}
\]
Now with the $1$-dimensional gauge norm with respect to the second dimension, we may write
\[
\left\|\left\|F\right\|_{L_{\mu_1}^{p_1}(I)}\right\|_{L_{\mu_2}^{p_2}(J)}
:=
\left\|\left\|F_{\bullet}\right\|_{L_{\mu_1}^{p_1}(I)}\right\|_{L_{\mu_2}^{p_2}(J)},
\]
so that \autoref{def:repeated_norm} makes sense.
\end{remark}
\begin{defn}
\label{def:repeated_norm2}
Similar to \autoref{def:repeated_norm}, we define
\[
\left\|F\right\|_{L_{\mu_1 \times \mu_2}^{(\widehat{p_1,p_2})}(I \times J)}
:=
\left\|\left\|F\right\|_{L_{\mu_2}^{p_2}(J)}\right\|_{L_{\mu_1}^{p_1}(I)},
\]
and
\[
\left\|F\right\|_{L_{\mu_1 \times \mu_2}^{(p_1,\Phi)}(I \times J)}
:=
\left\|\left\|F\right\|_{L_{\mu_1}^{p_1}(I)}\right\|_{L_{\mu_2}^{\Phi}(J)}.
\]
\end{defn}

\section{Proofs of the main theorems}
We begin with a Lemma relating the $2$-dimensional gauge norm on $I \times J$ and the repeated gauge norm from \autoref{def:repeated_norm}.
\begin{lemma}
\label{lemma:invert_norm_inequality}
Assume that $\Phi$ is submultiplicative. Let $F \in L_\mu^\Phi(I \times J)$. Then,
\[
\left\|F\right\|_{L_\mu^\Phi(I \times J)} 
\leq 
\left\|\left\|F\right\|_{L_{\mu_1}^\Phi(I)}\right\|_{L_{\mu_2}^\Phi(J)}
\]
\end{lemma}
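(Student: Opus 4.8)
The plan is to unwind the definition of the two-dimensional gauge norm and exploit submultiplicativity of $\Phi$ to separate the two variables. Write $k_2 := \left\|\left\|F\right\|_{L_{\mu_1}^\Phi(I)}\right\|_{L_{\mu_2}^\Phi(J)}$; our goal is to show that $k_2$ is an admissible value in the infimum defining $\left\|F\right\|_{L_\mu^\Phi(I \times J)}$, i.e. that $\int_{I \times J}\Phi\!\left(\frac{F}{k_2}\right)\dd\mu \leq 1$. For each fixed $x_2 \in J$, set $g(x_2) := \left\|F_{x_2}\right\|_{L_{\mu_1}^\Phi(I)}$, so that (when $g(x_2) > 0$) the definition of the one-dimensional gauge norm gives $\int_I \Phi\!\left(\frac{F(x_1,x_2)}{g(x_2)}\right)\dd\mu_1(x_1) \leq 1$; likewise $k_2 = \left\|g\right\|_{L_{\mu_2}^\Phi(J)}$ gives $\int_J \Phi\!\left(\frac{g(x_2)}{k_2}\right)\dd\mu_2(x_2) \leq 1$.

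The key algebraic step is to factor the argument as $\frac{F(x_1,x_2)}{k_2} = \frac{F(x_1,x_2)}{g(x_2)}\cdot\frac{g(x_2)}{k_2}$ and apply submultiplicativity, $\Phi(st) \leq \Phi(s)\Phi(t)$, with $s = \frac{F(x_1,x_2)}{g(x_2)}$ and $t = \frac{g(x_2)}{k_2}$. This yields, pointwise,
\[
\Phi\!\left(\frac{F(x_1,x_2)}{k_2}\right) \leq \Phi\!\left(\frac{F(x_1,x_2)}{g(x_2)}\right)\Phi\!\left(\frac{g(x_2)}{k_2}\right).
\]
Integrating in $x_1$ over $I$ against $\mu_1$ and using the first estimate above leaves $\int_I \Phi\!\left(\frac{F(x_1,x_2)}{k_2}\right)\dd\mu_1(x_1) \leq \Phi\!\left(\frac{g(x_2)}{k_2}\right)$; then integrating in $x_2$ over $J$ against $\mu_2$ and using the second estimate gives $\int_{I\times J}\Phi\!\left(\frac{F}{k_2}\right)\dd\mu \leq 1$, which is exactly what we need. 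By definition of the gauge norm as an infimum, $\left\|F\right\|_{L_\mu^\Phi(I \times J)} \leq k_2$.

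The main obstacle is handling the degenerate cases carefully so that the factorization is legitimate: if $g(x_2) = 0$ on a set of positive $\mu_2$-measure, or if $k_2 = 0$ or $k_2 = \infty$. When $g(x_2) = 0$ one has $F(\cdot, x_2) = 0$ $\mu_1$-a.e., so the $x_1$-slice contributes nothing to the integral and can simply be discarded; when $k_2 = 0$ one deduces $F = 0$ $\mu$-a.e. and the inequality is trivial; the case $k_2 = \infty$ is vacuous. A further subtlety is that the infimum in the one-dimensional gauge norm need not be attained, so strictly speaking one should run the argument with $g(x_2) + \varepsilon$ in place of $g(x_2)$ (and $k_2 + \varepsilon$ in place of $k_2$), obtain $\int_{I\times J}\Phi\!\left(\frac{F}{(k_2+\varepsilon)(1+\varepsilon)}\right)\dd\mu \leq 1$ or similar, and let $\varepsilon \to 0^+$; alternatively one invokes the standard fact that for a Young function the defining integral is $\leq 1$ at $k = \left\|\cdot\right\|_{L^\Phi}$ itself (by monotone convergence), which sidesteps the issue. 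One should also note in passing that measurability of $x_2 \mapsto g(x_2)$ is what makes the outer norm well-defined, as recorded in Remark~\ref{remark:repeatednorms}.
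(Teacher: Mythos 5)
Your proof is correct and follows essentially the same route as the paper's: factor $\frac{F(x_1,x_2)}{k_2}=\frac{F(x_1,x_2)}{g(x_2)}\cdot\frac{g(x_2)}{k_2}$, apply submultiplicativity of $\Phi$, and integrate first in $x_1$ and then in $x_2$, using the defining property of the gauge norm at each stage. Your additional attention to the degenerate cases ($g(x_2)=0$, $k_2\in\{0,\infty\}$) and to the fact that the defining integral is $\leq 1$ at the norm value itself makes explicit details the paper leaves implicit, but does not change the argument.
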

\begin{proof}
Define $k \colon J \to [0, \infty)$ by $k(t) = \left\|F_{t}\right\|_{L_{\mu_1}^\Phi(I)}$. Here, $F_t$ is the function described in \autoref{remark:repeatednorms}. To prove the stated inequality, it suffices to show that
\[
\int_J\int_I \Phi\left(\frac{F(x_1,x_2)}{
\left\|k\right\|_{L_{\mu_2}^\Phi(J)}}\right) \dd\mu_1(x_1)\dd\mu_2(x_2) \leq 1.
\]
We have
\[
\begin{aligned}
&\int_J\int_I \Phi\left(\frac{F(x_1,x_2)}{\left\|k\right\|_{L_{\mu_2}^\Phi(J)}}\right) \dd\mu_1(x_1)\dd\mu_2(x_2) \\
=&
\int_J\int_I \Phi\left(\frac{F(x_1,x_2)\cdot k(x_2)}{\left\|k\right\|_{L_{\mu_2}^\Phi(J)}\cdot k(x_2)}\right) \dd\mu_1(x_1)\dd\mu_2(x_2)\\
\leq&
\int_J\int_I \Phi\left(\frac{F(x_1,x_2)}{k(x_2)}\right)\cdot\Phi\left(\frac{k(x_2)}{\left\|k\right\|_{L_{\mu_2}^\Phi(J)}}\right)\dd\mu_1(x_1)\dd\mu_2(x_2)\\
&\hspace{4in}\text{(submultiplicativity of $\Phi$)}\\
=&
\int_J\left(\int_I \Phi\left(\frac{F(x_1,x_2)}{k(x_2)}\right)\dd\mu_1(x_1)\right)\cdot\Phi\left(\frac{k(x_2)}{ \left\|k\right\|_{L_{\mu_2}^\Phi(J)}}\right)\dd\mu_2(x_2)\\
\leq&
\int_J \Phi\left(\frac{k(x_2)}{\left\|k\right\|_{L_{\mu_2}^\Phi(J)}}\right)\dd\mu_2(x_2)\\
&\hspace{4in}\text{(by definition of $k$ and the gauge norm)}\\
\leq&\;1\\
&\hspace{4in}\text{(by definition of the gauge norm)}
\end{aligned}
\]
This completes the proof.
\end{proof}

\begin{lemma}
\label{lemma:invert_norm_inequality_second_var}
Assume that $\Phi$ is submultiplicative. If $F \in L_\mu^\Phi(I \times J)$ depends only on the second variable, i.e. $F(x_1,x_2) = g(x_2)$ for some $g \in L_{\mu_2}^\Phi(J)$, then
\[
\Phi^{-1}\left(\mu_1(I)\right)\cdot\left\|g\right\|_{L_{\mu_2}^\Phi(J)}
\leq 
\left\|F\right\|_{L_\mu^\Phi(I \times J)}
\leq 
\left[\Phi^{-1}\left(\frac{1}{\mu_1(I)}\right)\right]^{-1}\cdot \left\| g\right\|_{L_{\mu_2}^\Phi(J)}
\]
\end{lemma}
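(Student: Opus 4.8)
The plan is to compute the gauge norm of $F$ directly from its definition, exploiting the fact that $F$ is constant in the first variable. Fix $k > 0$. Since $F(x_1, x_2) = g(x_2)$, the double integral defining the gauge norm factors as
\[
\int_{I \times J} \Phi\!\left(\frac{F}{k}\right)\dd\mu
= \int_J \Phi\!\left(\frac{g(x_2)}{k}\right)\!\left(\int_I \dd\mu_1(x_1)\right)\dd\mu_2(x_2)
= \mu_1(I)\int_J \Phi\!\left(\frac{g(x_2)}{k}\right)\dd\mu_2(x_2).
\]
So the condition $\int_{I\times J}\Phi(F/k)\,\dd\mu \le 1$ is equivalent to $\int_J \Phi(g/k)\,\dd\mu_2 \le 1/\mu_1(I)$, and $\|F\|_{L_\mu^\Phi(I\times J)}$ is the infimum of the $k>0$ satisfying this.

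First I would prove the upper bound. Set $k_0 = [\Phi^{-1}(1/\mu_1(I))]^{-1}\,\|g\|_{L_{\mu_2}^\Phi(J)}$, the candidate for the bound; I may assume $\|g\|_{L_{\mu_2}^\Phi(J)} > 0$, else the statement is trivial. Write $\lambda = \Phi^{-1}(1/\mu_1(I))$, so $k_0 = \lambda^{-1}\|g\|_{L_{\mu_2}^\Phi(J)}$ and $g/k_0 = \lambda\cdot g/\|g\|_{L_{\mu_2}^\Phi(J)}$. Since $\Phi$ is convex with $\Phi(0)=0$, it is superadditive on scaling by $\lambda \le 1$ in the sense that $\Phi(\lambda t) \le \lambda\,\Phi(t)$ for $t \ge 0$ whenever $\lambda \in [0,1]$; here $\lambda = \Phi^{-1}(1/\mu_1(I)) \le 1$ because $\mu_1(I) \ge 1$ — wait, this is not assumed, so I must be more careful. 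The cleaner route avoids assuming $\lambda \le 1$: I would instead verify directly that $k_0$ is admissible by a change of variables in the definition of the one-dimensional gauge norm. Precisely, $\int_J \Phi(g/k_0)\,\dd\mu_2 \le 1/\mu_1(I)$ should follow from the definition of $\|g\|_{L_{\mu_2}^\Phi(J)}$ together with the identity $\Phi(\Phi^{-1}(1/\mu_1(I))\cdot s) $, using that $t \mapsto \Phi(t\cdot s)/\Phi(t)$-type estimates, or most simply by invoking that for a Young function $\Phi(\alpha u) \le \alpha \Phi(u)$ when $0 \le \alpha \le 1$ and $\Phi(\alpha u) \ge \alpha \Phi(u)$ when $\alpha \ge 1$, and checking both cases for $\alpha = \Phi^{-1}(1/\mu_1(I))$ against the two cases $\mu_1(I) \ge 1$ and $\mu_1(I) < 1$. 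I expect this case analysis, rather than anything deep, to be the main obstacle: it is the only place where the interplay between $\Phi^{-1}$ and the convexity inequality $\Phi(\alpha u) \lessgtr \alpha\Phi(u)$ must be handled with the correct direction.

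For the lower bound, I would argue symmetrically. Let $k < \Phi^{-1}(\mu_1(I))\cdot\|g\|_{L_{\mu_2}^\Phi(J)}$ be arbitrary; the goal is to show $\int_{I\times J}\Phi(F/k)\,\dd\mu > 1$, i.e. $\mu_1(I)\int_J\Phi(g/k)\,\dd\mu_2 > 1$, i.e. $\int_J\Phi(g/k)\,\dd\mu_2 > 1/\mu_1(I)$. Writing $k = \beta\,\|g\|_{L_{\mu_2}^\Phi(J)}$ with $\beta < \Phi^{-1}(\mu_1(I))$, and again using the homogeneity-type bounds for $\Phi$ in the appropriate case ($\mu_1(I) \ge 1$ or $< 1$), together with the fact that $\int_J \Phi(g/\|g\|_{L^\Phi_{\mu_2}})\,\dd\mu_2$ either equals $1$ or is at least $1$ (by the definition of the gauge norm and, for the equality, left-continuity of $\Phi$), I would conclude the strict inequality. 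Taking the supremum over such $k$ gives $\|F\|_{L_\mu^\Phi(I\times J)} \ge \Phi^{-1}(\mu_1(I))\cdot\|g\|_{L_{\mu_2}^\Phi(J)}$, completing the proof. A remark worth including: when $\mu_1(I) = 1$ both $\Phi^{-1}(\mu_1(I))$ and $[\Phi^{-1}(1/\mu_1(I))]^{-1}$ equal $\Phi^{-1}(1) = 1$, so the two bounds collapse to an equality, as expected since $F$ then has the same gauge norm as $g$.
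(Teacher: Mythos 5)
Your factorization step is correct and is actually a nice simplification: since $F(x_1,x_2)=g(x_2)$, indeed $\int_{I\times J}\Phi(F/k)\,\dd\mu=\mu_1(I)\int_J\Phi(g/k)\,\dd\mu_2$, so $\left\|F\right\|_{L_\mu^\Phi(I\times J)}=\inf\{k>0:\int_J\Phi(g/k)\,\dd\mu_2\le 1/\mu_1(I)\}$. But the way you propose to finish has a genuine gap: your plan never uses the hypothesis that $\Phi$ is submultiplicative, and the lemma is false without it, so no amount of case analysis on $\mu_1(I)\ge 1$ versus $\mu_1(I)<1$ combined with the convexity inequalities $\Phi(\alpha u)\le\alpha\Phi(u)$ ($0\le\alpha\le1$) and $\Phi(\alpha u)\ge\alpha\Phi(u)$ ($\alpha\ge1$) can close it. Concretely, take $\Phi(t)=t^2/100$ (a Young function, not submultiplicative), $g\equiv 1$, $\mu_2(J)=1$, $\mu_1(I)=c$: then $\left\|g\right\|_{L_{\mu_2}^\Phi(J)}=1/10$, $\left\|F\right\|_{L_\mu^\Phi(I\times J)}=\sqrt{c}/10$, while the claimed upper bound is $\sqrt{c}/100$ and the claimed lower bound is $\sqrt{c}$; both inequalities fail. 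You can also see the dead end inside your own case analysis: in the case $\mu_1(I)\ge1$ with $\lambda=\Phi^{-1}(1/\mu_1(I))\le1$, convexity only gives $\int_J\Phi(\lambda g/\|g\|)\,\dd\mu_2\le\lambda$, and $\lambda\le 1/\mu_1(I)$ is equivalent to $\Phi(1/\mu_1(I))\ge 1/\mu_1(I)$, which is not a consequence of convexity.

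The fix is to use submultiplicativity exactly where the paper does, and then your factorization makes the proof shorter than the paper's (which gets the upper bound from \autoref{lemma:invert_norm_inequality} together with the computation $\left\|1\right\|_{L_{\mu_1}^\Phi(I)}=[\Phi^{-1}(1/\mu_1(I))]^{-1}$, and verifies the lower bound directly, also via submultiplicativity). For the upper bound: $\Phi\bigl(\Phi^{-1}(1/\mu_1(I))\,u\bigr)\le\Phi\bigl(\Phi^{-1}(1/\mu_1(I))\bigr)\Phi(u)=\tfrac{1}{\mu_1(I)}\Phi(u)$, so with $u=g/\|g\|_{L_{\mu_2}^\Phi(J)}$ and the standard fact $\int_J\Phi(g/\|g\|_{L_{\mu_2}^\Phi(J)})\,\dd\mu_2\le1$ (note: $\le 1$, not $\ge 1$ as in your parenthetical), your $k_0$ is admissible. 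For the lower bound: if $k$ is admissible, i.e. $\int_J\Phi(g/k)\,\dd\mu_2\le1/\mu_1(I)$, then submultiplicativity in the form $\Phi\bigl(\Phi^{-1}(\mu_1(I))\,g/k\bigr)\le\mu_1(I)\,\Phi(g/k)$ gives $\int_J\Phi\bigl(\Phi^{-1}(\mu_1(I))\,g/k\bigr)\dd\mu_2\le1$, hence $k\ge\Phi^{-1}(\mu_1(I))\|g\|_{L_{\mu_2}^\Phi(J)}$; take the infimum. Finally, your closing remark is also off: $\Phi^{-1}(1)=1$ is not automatic (e.g. $\Phi(t)=t^2+t^4$ is submultiplicative with $\Phi^{-1}(1)<1$), so when $\mu_1(I)=1$ the two bounds do not collapse to an equality, although your exact formula does give $\left\|F\right\|_{L_\mu^\Phi(I\times J)}=\left\|g\right\|_{L_{\mu_2}^\Phi(J)}$ in that case.
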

\begin{remark}
In the case that $\Phi(t) = |t|^q$ for $q\geq 1$, the above inequalities are equalities.
\end{remark}
\begin{proof}[Proof of \autoref{lemma:invert_norm_inequality_second_var}]
The second inequality follows from \autoref{lemma:invert_norm_inequality}. Indeed,
\[
\begin{aligned}
\left\|F\right\|_{L_\mu^\Phi(I \times J)}
&\leq
\left\|\left\|F\right\|_{L_{\mu_1}^\Phi(I)}\right\|_{L_{\mu_2}^\Phi(J)}
&&\text{(\autoref{lemma:invert_norm_inequality})}\\
&= 
\left\|1\right\|_{L_{\mu_1}^\Phi(I)}\cdot\left\|g\right\|_{L_{\mu_2}^\Phi(J)}
\end{aligned}
\]
Note that
\[
\begin{aligned}
    \left\|1\right\|_{L_{\mu_1}^\Phi(I)} :&= \inf\left\{k > 0 : \int_I \Phi\left(\frac{1}{k}\right)\dd \mu_1 \leq 1\right\} \\
    &= \inf\left\{k > 0 : \mu_1(I)\cdot \Phi\left(\frac{1}{k}\right)\leq 1\right\} \\
    &= \inf\left\{k > 0 : \Phi\left(\frac{1}{k}\right)\leq \frac{1}{\mu_1(I)}\right\} \\
    &= \inf\left\{k > 0 : \frac{1}{k}\leq \Phi^{-1}\left(\frac{1}{\mu_1(I)}\right)\right\} \\
    &= \inf\left\{k > 0 : k \geq \left[\Phi^{-1}\left(\frac{1}{\mu_1(I)}\right)\right]^{-1}\right\} \\
    &= \left[\Phi^{-1}\left(\frac{1}{\mu_1(I)}\right)\right]^{-1},
\end{aligned}
\]
which yields
\[
\left\|F\right\|_{L_\mu^\Phi(I \times J)} \leq \left[\Phi^{-1}\left(\frac{1}{\mu_1(I)}\right)\right]^{-1}\cdot \left\| g \right\|_{L_{\mu_2}^\Phi(J)},
\]
as desired. Now we turn to the first inequality. We will show that
\[
\int_J \Phi\left(\frac{g(x_2)}{\left[\Phi^{-1}\left(\mu_1(I)\right)\right]^{-1} \cdot \left\|F\right\|_{L_{\mu}^\Phi(I \times J)}}\right) \dd\mu_2(x_2) \leq 1.
\]

Observe,
\[
\begin{aligned}
&\int_J \Phi\left(\frac{g(x_2)}{\left[\Phi^{-1}\left(\mu_1(I)\right)\right]^{-1} \cdot \left\|F\right\|_{L_{\mu}^\Phi(I \times J)}}\right) \dd\mu_2(x_2)\\
=& 
\frac{1}{\mu_1(I)} \int_I \int_J \Phi\left(\frac{F(x_1,x_2)}{\left[\Phi^{-1}\left(\mu_1(I)\right)\right]^{-1} \cdot \left\|F\right\|_{L_{\mu}^\Phi(I \times J)}}\right) \dd\mu_2(x_2)\dd\mu_1(x_1)\\
\leq& 
\frac{1}{\mu_1(I)} \int_I \int_J \Phi\left(\frac{1}{\left[\Phi^{-1}\left(\mu_1(I)\right)\right]^{-1}}\right)\cdot \Phi\left(\frac{F(x_1,x_2)}{\left\|F\right\|_{L_{\mu}^\Phi(I \times J)}}\right) \dd\mu_2(x_2)\dd\mu_1(x_1)\\
&\hspace{4in}\text{(submultiplicativity of $\Phi$)}\\
=& \frac{1}{\mu_1(I)} \int_I \int_J \mu_1(I)\cdot \Phi\left(\frac{F(x_1,x_2)}{\left\|F\right\|_{L_{\mu}^\Phi(I \times J)}}\right) \dd\mu_2(x_2)\dd\mu_1(x_1)\\
=& \int_I \int_J \Phi\left(\frac{F(x_1,x_2)}{\left\|F\right\|_{L_{\mu}^\Phi(I \times J)}}\right) \dd\mu_2(x_2)\dd\mu_1(x_1)\\
\leq&\; 1.\\
&\hspace{4in}\text{(by definition of the gauge norm)}
\end{aligned}
\]
We have shown that
\[
\left\|g\right\|_{L_{\mu_2}^\Phi(J)} \leq  \left[\Phi^{-1}\left(\mu_1(I)\right)\right]^{-1}\cdot\left\|F\right\|_{L_\mu^\Phi(I \times J)},
\]
which implies
\[
\Phi^{-1}\left(\mu_1(I)\right)\cdot\left\|g\right\|_{L_{\mu_2}^\Phi(J)} \leq  \left\|F\right\|_{L_\mu^\Phi(I \times J)},
\]
proving the first inequality. This completes the proof.
\end{proof}

\begin{lemma}
\label{lemma:invert_norm_inequality_first_var}
Assume that $\Phi$ is submultiplicative. If $F \in L_\mu^\Phi(I \times J)$ depends only on the first variable, i.e. $F(x_1,x_2) = g(x_1)$ for some $g \in L_{\mu_1}^\Phi(I)$, then
\[
\Phi^{-1}\left(\mu_2(J)\right)\cdot\left\|g\right\|_{L_{\mu_1}^\Phi(I)}
\leq 
\left\|F\right\|_{L_\mu^\Phi(I \times J)} 
\leq 
\left[\Phi^{-1}\left(\frac{1}{\mu_2(J)}\right)\right]^{-1}\cdot \left\|g\right\|_{L_{\mu_1}^\Phi(I)}.
\]
\end{lemma}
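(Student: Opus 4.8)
The plan is to run essentially the same argument as in the proof of \autoref{lemma:invert_norm_inequality_second_var}, but with the roles of the two coordinates interchanged. Throughout, write $F(x_1,x_2) = g(x_1)$, and recall that $\mu = \mu_1 \times \mu_2$.

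For the upper bound I would appeal to \autoref{lemma:invert_norm_inequality} directly. The inner gauge norm in that lemma is taken over the first coordinate, and for each $t \in J$ the slice $F_t$ of \autoref{remark:repeatednorms} equals $g$ (independently of $t$); hence the function $t \mapsto \|F_t\|_{L_{\mu_1}^\Phi(I)}$ is the constant $\|g\|_{L_{\mu_1}^\Phi(I)}$, and so
\[
\|F\|_{L_\mu^\Phi(I \times J)} \le \big\| \|F\|_{L_{\mu_1}^\Phi(I)} \big\|_{L_{\mu_2}^\Phi(J)} = \|g\|_{L_{\mu_1}^\Phi(I)} \cdot \|1\|_{L_{\mu_2}^\Phi(J)}.
\]
The chain of equalities used to evaluate $\|1\|_{L_{\mu_1}^\Phi(I)}$ in the proof of \autoref{lemma:invert_norm_inequality_second_var}, applied verbatim with $\mu_1(I)$ replaced by $\mu_2(J)$, gives $\|1\|_{L_{\mu_2}^\Phi(J)} = \big[\Phi^{-1}(1/\mu_2(J))\big]^{-1}$, which is the right-hand inequality. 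For the lower bound, as before it suffices to show
\[
\int_I \Phi\!\left(\frac{g(x_1)}{\big[\Phi^{-1}(\mu_2(J))\big]^{-1} \cdot \|F\|_{L_\mu^\Phi(I \times J)}}\right) \dd\mu_1(x_1) \le 1,
\]
since this rearranges to $\Phi^{-1}(\mu_2(J)) \cdot \|g\|_{L_{\mu_1}^\Phi(I)} \le \|F\|_{L_\mu^\Phi(I\times J)}$. Because $F(x_1,x_2)=g(x_1)$ is independent of $x_2$, the left-hand side equals $\frac{1}{\mu_2(J)} \int_J \int_I \Phi(\,\cdots\,)\,\dd\mu_1\,\dd\mu_2$; applying submultiplicativity of $\Phi$ to factor out $\Phi\big(\Phi^{-1}(\mu_2(J))\big) = \mu_2(J)$, cancelling, and recognizing the remaining integral as $\int_{I\times J} \Phi\big(F/\|F\|_{L_\mu^\Phi(I\times J)}\big)\,\dd\mu \le 1$ by the definition of the gauge norm completes the estimate.

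I do not expect a substantive obstacle here: the only point that warrants a second look is that \autoref{lemma:invert_norm_inequality} is \emph{not} symmetric in the two coordinates — its inner norm is always taken over $I$ — so one must check that this ordering is compatible with $g$ depending on the first variable. It is, and in fact this makes the upper bound slightly more immediate than in \autoref{lemma:invert_norm_inequality_second_var} (no intermediate slice computation is needed). The structural ingredients, namely submultiplicativity of $\Phi$ together with the identity $\Phi(\Phi^{-1}(t)) = t$, are used exactly as in the previous two lemmas.
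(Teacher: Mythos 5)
Your argument is correct and is precisely the ``symmetric'' argument that the paper's one-line proof of \autoref{lemma:invert_norm_inequality_first_var} intends, mirroring the proof of \autoref{lemma:invert_norm_inequality_second_var} step by step. Your observation that \autoref{lemma:invert_norm_inequality} (whose inner norm is always over $I$) still applies directly — indeed more simply, since the slice norm is constant in $t$ — is a correct and worthwhile clarification of what ``symmetrical'' means here.
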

\begin{proof}
The result follows from an argument symmetrical to that of \autoref{lemma:invert_norm_inequality_second_var}
\end{proof}
\newpage
We are now ready to prove our main results.
\begin{proof}[Proof of \autoref{thm:two_dim_sufficiency}]
Let
\[
f_{\nu, \mathsf{av}} 
= \frac{1}{\nu(I \times J)}\int_{I \times J} f(y_1, y_2) \;\dd\nu_1(y_1)\dd\nu_2(y_2).
\]

Using the triangle inequality,
\[
\begin{aligned}
&\left\|f - f_{\nu, \mathsf{av}} \right\|_{L_\mu^\Phi(I \times J)}\\
\leq& 
\left\|f - \frac{1}{\nu_1(I)}\int_I f(y_1,\bullet) \dd\nu_1(y_1) \right\|_{L_\mu^\Phi(I \times J)} 
+ 
\left\|\frac{1}{\nu_1(I)}\int_I f(y_1,\bullet) \dd\nu_1(y_1) - f_{\nu, \mathsf{av}}\right\|_{L_\mu^\Phi(I \times J)}\\
=&: S + T
\end{aligned}
\]
Using \autoref{lemma:invert_norm_inequality} and the $1$-dimensional result \cite[Theorem 1.4]{korobenko_milshstein_yong}, we have
\[
\begin{aligned}
S
=& \left\|f - \frac{1}{\nu_1(I)}\int_I f(y_1,\bullet) \dd\nu_1(y_1) \right\|_{L_\mu^\Phi(I \times J)}\\
\leq& 
\left\|\left\|f - \frac{1}{\nu_1(I)}\int_I f(y_1,\bullet) \dd\nu_1(y_1) \right\|_{L_{\mu_1}^\Phi(I)}\right\|_{L_{\mu_2}^\Phi(J)}\\
&\hspace{4.5in}\text{(\autoref{lemma:invert_norm_inequality})}\\
\leq&
\left\|C_0(\Phi)\cdot K_{p_1, \Phi}(\mu_1, \nu_1, w_1)\cdot\left\|\frac{\partial f}{\partial x_1} \right\|_{L_{w_1}^{p_1}(I)}\right\|_{L_{\mu_2}^\Phi(J)}\\
&\hspace{4.5in}\text{(\cite[Theorem 1.4]{korobenko_milshstein_yong})}\\
=&\;
C_1\cdot \left\|\frac{\partial f}{\partial x_1} \right\|_{L_{w_1 \times \mu_2}^{(p_1, \Phi)}(I \times J)}.
\end{aligned}
\]
\newpage
Also,
\[
\begin{aligned}
T &= \left\|\frac{1}{\nu_1(I)}\int_I f(y_1,\bullet) \dd\nu_1(y_1) - \frac{1}{\nu(I \times J)}\int_{I \times J} f(y_1, y_2) \;\dd\nu_1(y_1)\dd\nu_2(y_2)\right\|_{L_\mu^\Phi(I \times J)}\\
&= \frac{1}{\nu_1(I)} \left\|\int_I f(y_1,\bullet) \dd\nu_1(y_1) - \frac{1}{\nu_2(J)}\int_I \int_J f(y_1, y_2) \;\dd\nu_2(y_2)\; \dd\nu_1(y_1) \right\|_{L_\mu^\Phi(I \times J)}\\
&= \frac{1}{\nu_1(I)} \left\|\int_I \left(f(y_1,\bullet) - \frac{1}{\nu_2(J)} \int_J f(y_1, y_2) \;\dd\nu_2(y_2)\right) \dd\nu_1(y_1) \right\|_{L_\mu^\Phi(I \times J)}\\
&\leq \frac{\left[\Phi^{-1}\left(\frac{1}{\mu_1(I)}\right)\right]^{-1}}{\nu_1(I)} \left\|\int_I \left(f(y_1,\bullet) - \frac{1}{\nu_2(J)} \int_J f(y_1, y_2) \;\dd\nu_2(y_2)\right) \dd\nu_1(y_1) \right\|_{L_{\mu_2}^\Phi(J)}\\
&\hspace{4.5in}\text{(\autoref{lemma:invert_norm_inequality_second_var})}\\
&\leq \frac{2\left[\Phi^{-1}\left(\frac{1}{\mu_1(I)}\right)\right]^{-1}}{\nu_1(I)} \int_I \left\| f(y_1,\bullet) - \frac{1}{\nu_2(J)}\int_J f(y_1, y_2) \;\dd\nu_2(y_2)  \right\|_{L_{\mu_2}^\Phi(J)} \dd\nu_1(y_1) \\
&\hspace{4.5in}\text{(\autoref{lemma:gen_minkowski})}\\
&\leq 
\frac{2\left[\Phi^{-1}\left(\frac{1}{\mu_1(I)}\right)\right]^{-1}}{\nu_1(I)} \cdot C_0(\Phi)\cdot K_{p_2,\Phi}(\mu_2, \nu_2, w_2) \int_I \left\| \frac{\partial f}{\partial x_2}  \right\|_{L_{w_2}^{p_2}(J)} \dd\nu_1(y_1) \\
&\hspace{4.5in}\text{(\cite[Theorem 1.4]{korobenko_milshstein_yong})}\\
&= \frac{2\left[\Phi^{-1}\left(\frac{1}{\mu_1(I)}\right)\right]^{-1}}{\nu_1(I)} \cdot C_2 \cdot \left\| \frac{\partial f}{\partial x_2}  \right\|_{L_{\nu_1 \times w_2}^{(\widehat{1,p_2})}(I \times J)} \\
&\leq \frac{2\left[\Phi^{-1}\left(\frac{1}{\mu_1(I)}\right)\right]^{-1}}{\nu_1(I)^{1/{s_1}}} \cdot C_2 \left\| \frac{\partial f}{\partial x_2}  \right\|_{L_{\nu_1 \times w_2}^{(\widehat{s_1,p_2})}(I \times J)}. \\
&\hspace{4.5in}\text{(H\"older's inequality)}\\
\end{aligned}
\]
\end{proof}
\newpage
\begin{proof}[Proof of \autoref{thm:two_dim_necessity}]
Assume that \autoref{eqn:sufficiencythm} holds for all Lipschitz continuous functions $f\colon I \times J \to \R$. Fix such an $f$, and fix $x_1 \in I$. Define
\begin{align*}
    g \colon J &\longrightarrow \R \\
    x_2 &\longmapsto f(x_1,x_2)
\end{align*}
Note that we may also view $g$ as a (Lipschitz continuous) function $I \times J \to \R$ that depends only on the second input variable. From this point of view, $\frac{\partial g}{\partial x_2} = \frac{\partial f}{\partial x_2}$. Therefore,
\[
\left\|g - \frac{1}{\nu_2(J)}\int_{J} g(y_2)\;\dd\nu_2(y_2)\right\|_{L_\mu^\Phi(I \times J)}
    \leq
    C_2 
    \cdot \frac{2 \left[\Phi^{-1}\left(\frac{1}{\mu_1(I)}\right)\right]^{-1}}{\nu_1(I)^{s_1}}
    \cdot\left\|\frac{\partial f}{\partial x_2}\right\|_{L_{\nu_1 \times w_2}^{(\widehat{s_1, p_2})}(I \times J)}.
\]
Now observe that
\[
\begin{aligned}
&
\left\|g - \frac{1}{\nu_2(J)}\int_{J} g(y_2) \;\dd\nu_2(y_2)\right\|_{L_{\mu_2}^\Phi(J)}\\
\leq& \left[\Phi^{-1}\left(\mu_1(I)\right)\right]^{-1}\cdot \left\|g - \frac{1}{\nu_2(J)}\int_{J} g(y_2) \;\dd\nu_2(y_2)\right\|_{L_\mu^\Phi(I \times J)}\\
&\hspace{4.5in}\text{(\autoref{lemma:invert_norm_inequality_second_var})}\\
\leq& \left[\Phi^{-1}\left(\mu_1(I)\right)\right]^{-1} \cdot \frac{2 \left[\Phi^{-1}\left(\frac{1}{\mu_1(I)}\right)\right]^{-1}}{\nu_1(I)^{s_1}}\cdot C_2\cdot\left\|\frac{\partial f}{\partial x_2}\right\|_{L_{\nu_1 \times w_2}^{(\widehat{s_1, p_2})}(I \times J)}\\
&\hspace{4.5in}\text{(\autoref{eqn:sufficiencythm})}\\
=& \left[\Phi^{-1}\left(\mu_1(I)\right)\right]^{-1} \cdot \frac{2 \left[\Phi^{-1}\left(\frac{1}{\mu_1(I)}\right)\right]^{-1}}{\nu_1(I)^{s_1}}\cdot C_2\cdot\left(\int_I\left\|\frac{\partial f}{\partial x_2}\right\|_{L_{w_2}^{p_2}(J)}^{s_1} \dd\nu_1(x_1)\right)^{1/s_1}\\
=& \left[\Phi^{-1}\left(\mu_1(I)\right)\right]^{-1} \cdot 2 \left[\Phi^{-1}\left(\frac{1}{\mu_1(I)}\right)\right]^{-1}\cdot C_2 \cdot \left\|\frac{\partial f}{\partial x_2}\right\|_{L_{w_2}^{p_2}(J)}.\\
\end{aligned}
\]
Thus we may apply \cite[Theorem 1.6]{korobenko_milshstein_yong} to say that $\tilde{K}_{p_2,\Phi}(\mu_2, \nu_2, w_2) < \infty$.

\smallskip
To show that $\tilde{K}_{p_1,\Phi}(\mu_1, \nu_1, w_1) < \infty$, instead fix $x_2 \in J$, and define $g$ to be
\begin{align*}
    g \colon I &\longrightarrow \R \\
    x_1 &\longmapsto f(x_1,x_2)
\end{align*}
Then, an argument symmetrical to the one above can be made to complete the proof.
\end{proof}
\newpage
\bibliography{biblio}
\bibliographystyle{alpha}
\vspace{+0.25in}
\end{document}